\newtheorem{theorem}{Theorem}[section]
\newtheorem{proposition}[theorem]{Proposition}
\theoremstyle{definition}
\newtheorem{definition}[theorem]{Definition}
\newcommand{\B}{\ensuremath{\mathbf{B}}}
\newcommand{\A}{\ensuremath{\mathbf{A}}}
\newcommand{\Cat}{\ensuremath{\mathsf{Cat}}}
\newcommand{\Set}{\ensuremath{\mathsf{Set}}}
\newcommand{\Mon}{\ensuremath{\mathsf{Mon}}}
\newcommand{\C}{\ensuremath{\mathbf{C}}}
\newcommand{\PsCat}{\ensuremath{\mathbf{PsCat}}}
\newcommand{\Bla}{\ensuremath{\mathbf{Bla}}}
\DeclareMathOperator{\dom}{dom}
\DeclareMathOperator{\cod}{cod}
\DeclareMathOperator{\map}{map}
\DeclareMathOperator{\cell}{2-cellstruct}
\begin{document}

\title{On categories with arbitrary 2-cell structures}\thanks{The author acknowledge Fundação para a Ciência e a Tecnologia (FCT) for its financial support via the project CDRSP Base Funding (DOI: 10.54499/UIDB/04044/2020). This research work was also supported by the Portuguese Foundation for Science and Technology FCT/MCTES (PIDDAC) through the following Projects: Associate Laboratory ARISE LA/P/0112/2020; UIDP/04044/2020; UIDB/04044/2020; PAMI - ROTEIRO/0328/2013 (N° 022158); MATIS (CENTRO-01-0145-FEDER-000014 - 3362); Generative.Thermodynamic; FruitPV; by CDRSP and ESTG from the Polytechnic Institute of Leiria.}


\author{Nelson Martins-Ferreira}





\begin{abstract}
When a category is equipped with a 2-cell structure it becomes a sesquicategory but not necessarily a 2-category. It is widely accepted that the latter property is equivalent to the middle interchange law. However, little attention has been given to the study of the category of all 2-cell structures (seen as sesquicategories with a fixed underlying base category) other than as a generalization for 2-categories. 
The purpose of this work is to highlight the significance of such a study, which can prove valuable in identifying intrinsic features pertaining to the base category. These ideas are expanded upon through the guiding example of the category of monoids. Specifically, when a monoid is viewed as a one-object category, its 2-cell structures resemble semibimodules.

\keywords{Pseudomonoid \and monoid transformation \and 2-cell structure \and sesquicategory \and 2-category monoidal category \and bicategory \and double category}
\end{abstract}

\date{Wed, 12Jun2024}

\maketitle

\section{Introduction}

This work continues a project initiated by the author sixteen years ago with the pre-print \cite{cmucpreprint2008}, followed by a sequence of papers and pre-prints \cite{preprint2009,NMF.15,NMF.17}. It originated with the observation that a pseudocategory \cite{NMF.06}, instead of being defined as a categorical structure internal to a 2-category, could be extended to a categorical structure internal to a sesquicatery \cite{Stell1994}.
Moreover, it was noted that considering a category equipped with a 2-cell structure is more appropriate than just a sesquicategory. This distinction might seem artificial since every category equipped with a 2-cell structure is precisely a sesquicategory.
However, the main goal pursued in this project is obtained by considering a fixed category and study all different 2-cell structures over it. This study is important because if varying the 2-cell structure on a category $\C$ it makes the category of pseudocategories internal to $\C$ to vary as well. Each category is trivially equipped with two different 2-cell structures, namely discrete and co-discrete. If $\C$ is equipped with the discrete 2-cell structure then a pseudocategory internal to $\C$ is an internal category while if $\C$ is equipped with the co-discrete 2-cell structure then a pseudocategory internal to $\C$ is a pre-category \cite{janelidze2003}. Moreover, if $\C$ is equivalent to $\Cat(\B)$ for some category $\B$ then $\C$ is equipped with a natural 2-cell structure and pseudocategories internal to $\C$ are equivalent to pseudo-double categories internal to $\B$ (see e.g. \cite{NMF.04}).

Suppose we could parameterize a family of 2-cell structures over a category $\C=\Cat(\B)$ using the unit interval, with the discrete and co-discrete structures indexed by 0 and 1, respectively, and the natural 2-cell structure of internal natural transformations indexed by the number one-half. In this scenario, we could envision a continuous deformation starting at $t=0$ with a strict double-category as studied by Grandis and Paré \cite{grandis1996}, transitioning into a pseudo-double category (see e.g. \cite{grandis2004,fiore2007}) as $t$ evolves from $0$ to $\frac{1}{2}$. Subsequently, this transition could continue, eventually leading to an extremely lax version where the unitality and associativity axioms are no longer present, not even up to isomorphism, as $t$ evolves from $\frac{1}{2}$ to $1$.
The main purpose of this paper is to give evidence of such a possibility with the category of monoids as  guiding example.

Inspired by the foundational work of MacLane, Ehresmann, and Bénabou, the concept of internal pseudocategory, as introduced in \cite{NMF.06}, arises as a versatile framework within the realm of categorical structures.
This concept, developed within the context of a 2-category, serves as a unified structure encompassing monoidal categories, double categories, and bicategories \cite{benabou1970, ehresmann1963,  maclane1963}. 
In alignment with Grothendieck's approach, endorsed by Brown and Janelidze \cite{brown1976,grothendieck1984,janelidze1990}, that embraces a geometric perspective on algebraic categories, the study of internal pseudocategories has a potential to provide valuable insights into the intrinsic structure of the base category.
In particular, when interpreted within $\Cat$, the 2-category of all categories, functors, and natural transformations, pseudocategories yield the familiar structures of monoidal categories, double categories, and bicategories. 
For instance, an internal bicategory is the same as an internal pseudo-double category in which the object of vertical morphisms is terminal. An internal monoidal category is the same as a pseudomonoid \cite{street97,mccrudden2000}, which is the same as a pseudocategory with a terminal object of objects. Moreover, a double category internal to a category $\B$ corresponds to an internal category in $\C=\Cat(\B)$ and can be viewed as a strict pseudocategory within $\C$.

While exploring categories of the form $\Cat(\B)$ or more generally abstract 2-categories offers diverse examples and situations, an even richer diversity emerges when considering categories with a 2-cell structure that is not bound by the middle interchange law. This departure, as suggested in \cite{cmucpreprint2008}, is akin to transitioning from the study of abelian groups to the broader realm of groups. Jordan's exploration of non-commutative lattices \cite{jordan1949} highlights the significance of considering structures beyond commutativity, indicating that restricting the focus to the commutative case would significantly limit the depth and richness of the theory. Similarly, in the realm of 2-cell structures, relaxing the middle-interchange law holds promise for expanding horizons and uncovering new avenues of exploration.

An important challenge arises when considering arbitrary or unnatural 2-cell structures within the context of internal pseudocategories and pseudomonoids: the inherent lack of coherence. To address this challenge, it becomes necessary to introduce additional coherence diagrams. While Mac Lane's Theorem \cite{maclane1992} provides a solid foundation for coherence in general, it primarily focuses on natural 2-cell structures, leaving out considerations for arbitrary structures. However, we can extend the concept of coherence to encompass the commutation of specific diagrams related to naturality, thus broadening the theorem's scope. This extension hints at the existence of a finite set of coherence diagrams that could potentially extend the theorem's applicability to arbitrary 2-cell structures.

For example, the requirement for the commutativity of the pentagon diagram:
\begin{equation}
    \xymatrix@C=-1.2pc{& i\oplus(j\oplus (k\oplus l))\ar@{=>}[rr] \ar@{=>}[ld] && i\oplus((j\oplus k)\oplus l) \ar@{=>}[rd]\\
(i\oplus j)\oplus (k\oplus l) \ar@{=>}[rrd] &&&& (i\oplus (j\oplus k))\oplus l \ar@{=>}[lld]\\
&&  ((i\oplus j)\oplus k)\oplus l}
\end{equation}
together with commutativity of the triangles:

\begin{align}
    \xymatrix{0\oplus(i\oplus j)\ar@{=>}[rr]\ar@{=>}[rd] && (0\oplus i)\oplus j \ar@{=>}[ld]\\
& i\oplus j}\\
\xymatrix{i\oplus(0\oplus j)\ar@{=>}[rr]\ar@{=>}[rd] && (i\oplus 0)\oplus j \ar@{=>}[ld]\\
& i\oplus j}\\
\xymatrix{i\oplus(j\oplus 0)\ar@{=>}[rr]\ar@{=>}[rd] && (i\oplus j)\oplus 0 \ar@{=>}[ld]\\
& i\oplus j}
\end{align}
and the further pentagons (little pentagon and degenerated squares):
\begin{align}
    \xymatrix{& 0\oplus(i\oplus 0)\ar@{=>}[rr] \ar@{=>}[ld] && (0\oplus i)\oplus 0 \ar@{=>}[rd]\\
0\oplus i \ar@{=>}[rrd] &&&& i\oplus 0 \ar@{=>}[lld]\\
&&  i}
\end{align}
\begin{align}
\xymatrix{& i\oplus(0\oplus 0)\ar@{=>}[rr] \ar@{=}[ld] && (i\oplus 0)\oplus 0 \ar@{=>}[rd]\\
i\oplus 0 \ar@{=>}[rrd] &&&& i\oplus 0 \ar@{=>}[lld]\\
&&  i}
\end{align}
\begin{align}
\xymatrix{& 0\oplus(0\oplus i)\ar@{=>}[rr] \ar@{=>}[ld] && (0\oplus 0)\oplus i \ar@{=}[rd]\\
0\oplus i \ar@{=>}[rrd] &&&& i\oplus 0 \ar@{=>}[lld]\\
&&  i}
\end{align}
would be a possibility. However, as it is well-known if the 2-cells are natural then the first pentagon and second triangle are sufficient to ensure coherence (it is of course possible that the list above is already redundant, even for abstract 2-cells). 
Beyond addressing coherence, ensuring the naturality of 2-cells may require the introduction of additional diagrams. This suggests the potential necessity for a restatement or refinement of the coherence theorem, which could lead to a deeper comprehension of coherence within more general 2-cell structures. Exploring these avenues remains a subject for future investigation, deserving further scrutiny and attention (for example \cite{Brin05} points out a possible different direction).

This paper presents a refined characterization of $\cell(\C)$, the category of 2-cell structures over a fixed category $\C$, with a focus on extending the results of \cite{cmucpreprint2008}. The paper also introduces Theorem \ref{thm: arbitrary 2-cell structs}, a general procedure for constructing parameterized 2-cell structures over arbitrary base categories. Throughout, the concepts are illustrated using the category of monoids as a guiding example.

\section{A category with a 2-cell structure}

This section revisits the concept of a category equipped with a 2-cell structure, as introduced in \cite{cmucpreprint2008} (see also \cite{NMF.15}). It is important to note that although a category with a 2-cell structure is the same as a sesquicategory, emphasis is placed on the fact that each category $\C$, treated as a base category, gives rise to a new category, $\cell(\C)$, whose objects consist of 2-cell structures over $\C$ along with the natural transformations between them.
Furthermore, given a fixed base category $\C$ and a 2-cell structure $H=(H,\dom,\cod,0,+)$, see below, the category $\PsCat(\C,H)$ of pseudocategories internal to $\C$ and relative to the 2-cell structure $H$ offers valuable insights into the intrinsic properties of the base category. This framework enables the definition of an equivalence between 2-cell structures $H$ and $H'$ whenever the categories $\PsCat(\C,H)$ and $\PsCat(\C,H')$, both of which are tetra-categories \cite{NMF.10a}, are equivalent.

\begin{definition}
A category $\C$ is said to be equipped with a 2-cell-structure if  together with $\C$ and 
\begin{equation}
    \hom_{\C}\colon{\C^{\text{op}}\times \C\to \Set} 
\end{equation}
is given a bifunctor 
\begin{equation}
    H\colon{\C^{\text{op}}\times \C\to \Set} 
\end{equation}
and natural transformations
\begin{eqnarray}
    \dom,\cod \colon H\Longrightarrow  \hom_{\C}\\
    0 \colon \hom_{\C}\Longrightarrow  H\\
   + \colon H\times_{\langle \dom,\cod\rangle} H\Longrightarrow  H 
\end{eqnarray}
    making the diagram 
    \begin{equation}
\xymatrix{H\times_{\langle \dom,\cod\rangle} H \ar@{=>}[r]^(.7){+} & H \ar@{=>}@<1.5ex>[r]^(.4){\dom} \ar@{=>}@<-1.5ex>[r]_(.4){\cod} & \hom_{\C} \ar@{=>}@<0ex>[l]|(.6){0}}
    \end{equation}
    an internal category in $\Set^{\C^{\text{op}}\times \C}$, i.e. an object in $\Cat(\Set^{\C^{\text{op}}\times \C})$.
\end{definition}

For a fixed category $\C$, a morphism from a 2-cell structure $(H,\dom,\cod,0,+)$ to a 2-cell structure $(H',\dom,\cod,0,+)$ is a natural transformation $\tau\colon{H\Longrightarrow H'}$ inducing a morphism at the level of internal categories in $\Set^{\C^{\text{op}}\times \C}$ as illustrated.
    \begin{equation}
\xymatrix{H\times_{\langle \dom,\cod\rangle} H \ar@{=>}[r]^(.7){+} \ar@{=>}[d] & H \ar@{=>}@<1.5ex>[r]^(.4){\dom} \ar@{=>}@<-1.5ex>[r]_(.4){\cod} \ar@{=>}[d]_{\tau} & \hom_{\C} \ar@{=>}@<0ex>[l]|{0} \ar@{=}[d]\\
H'\times_{\langle \dom,\cod\rangle} H' \ar@{=>}[r]^(.7){+} & H' \ar@{=>}@<1.5ex>[r]^(.4){\dom} \ar@{=>}@<-1.5ex>[r]_(.4){\cod} & \hom_{\C} \ar@{=>}@<0ex>[l]|{0} 
}
    \end{equation}

Once again, the following definitions are extracted from \cite{cmucpreprint2008}. For the sake of simplicity, with an abuse of notation, expressions of the form $H(f,g)(x)$ will be written as $gxf\in H(A',B')$ for every $f\colon{A'\to A}$, $g\colon{B\to B'}$ and $x\in H(A,B)$, as illustrated
\begin{equation}
    \xymatrix{A \ar@{=>}[r]^{x} & B\ar[d]^{g}\\A'\ar[u]^{f}\ar@{=>}[r]_{gxf} & B'}
\end{equation}
see also \cite{NMF.15} for more details. In the same way as the elements in $\hom_{\C}(A,B)$ are called morphisms from $A$ to $B$, the elements $x\in H(A,B)$ will be called 2-cells from $A$ to $B$. Further notions pertained to natural and invertible 2-cells as well as commutator 2-cells are referred to subsection 5.3 in \cite{preprint2009} but will not be needed here.

\begin{definition}\label{def: natural wrt}
    Let $(H,\dom,\cod,0,+)$ be a 2-cell structure over a category $\C$. A 2-cell $x \in H(A,B)$ is said to be natural with respect to a 2-cell  $y\in H(X,A)$ if the equation $\cod(x)y+x\dom(y)=x\cod(y)+\dom(x)y$ holds good. Moreover, a 2-cell $x\in H(A,B)$ is said to be natural if it is natural with respect to all possible 2-cells $y\in H(X,A)$ for all possible objects $X$ in $\C$. 
\end{definition}

The equation $\cod(x)y+x\dom(y)=x\cod(y)+\dom(x)y$ is illustrated as 
\[\xymatrix{
{X}
\ar@<1ex>@/^1pc/[rr]^{\dom(y)}="1"
\ar@<-1ex>@/_1pc/[rr]_{\cod(y)}="2"   &&  {A} 
\ar@<1ex>@/^1pc/[rr]^{\dom(x)}="3"
\ar@<-1ex>@/_1pc/[rr]_{\cod(x)}="4"   &&  {B} \\
\ar@{=>}_{y } "1";"2"
\ar@{=>}_{x } "3";"4"
}
\]
and it is clear that the horizontal composition (or tensor composition, denoted as $x\otimes y$ when it exists) is defined only for those horizontally composable pairs $(x,y)$ in which $x$ is natural with respect to $y$. 
In such cases, the tensor (or horizontal composition) is defined as either $x\otimes y=\cod(x)y+x\dom(y)$ or $x\otimes y=x\cod(y)+\dom(x)y$, where $\dom$ and $\cod$ are as displayed.
\begin{equation*}
    \xymatrix{{X} 
\ar@<1ex>@/^1pc/[rr]^{\dom(x)\dom(y)}="2"
\ar@<-1ex>@/_1pc/[rr]_{\cod(x)\cod(y) }="4"     
&& {B} & \\
\ar@{=>}|{x\otimes y } "2";"4"}
\end{equation*}

A 2-cell structure $H=(H,\dom,\cod,0,+)$ is considered natural if every 2-cell within it satisfies the naturality condition, implying the validity of the middle interchange law. This property promotes $(\C,H)$ from a sesquicategory to a 2-category.

The rationale behind adopting additive notation is twofold. Firstly, it facilitates the distinction between vertical and horizontal composition. While horizontal composition, when defined, typically exhibits tensorial or multiplicative behavior, vertical composition is additive in nature, as illustrated in our example of monoids discussed below. Secondly, the functoriality of $H$ allows for the interpretation of morphisms in $\hom_{\C}(A,B)$ as scalars and 2-cells in $H(A,B)$ as vectors, with wiskering composition akin to scalar multiplication \cite{NMF.15}, see also Proposition \ref{prop: one object} below.

When $\C$ is the category of monoids and monoid homomorphisms we have, for every pair of monoids $(A,B)$, $\hom(A,B)$ as the set of monoid homomorphisms from $A$ to $B$. In the same spirit, denote by $\map(A,B)$ the monoid of maps (set theoretical functions) from the underlying set of $A$ to the underlying set of $B$. As usual, the zero map is denoted by $0_{A,B}$ and the monoid operation on $\map(A,B)$ is obtained by component-wise addition. In order to equip $\Mon$  with a 2-cell structure  a bifunctor $H$ must be specified. There are obviously many possibilities. The structure considered here is inspired by the  canonical structure which is obtained when considering a monoid as a one object category but it is not comparable to it (see list of examples in Section \ref{sec: examples}). For each pair of monoids $(A,B)$ let us denote by $H(A,B)$ the subset of $\map(A,B)\times \hom(A,B)$ consisting on those pairs $(t,f)$ with $t\colon{A\to B}$ a map and $f\colon{A\to B}$ a monoid homomorphism  such that the sum $t+f$ is a monoid homomorphism. In other words, $(t,f)\in H(A,B)$ if and only if $t(0)=0$ and for every $x,y\in A$ the condition
\begin{equation}\label{eq: t(x+y)}    t(x+y)+f(x)+f(y)=t(x)+f(x)+t(y)+f(y)
\end{equation}
is satisfied. Furthermore, $\dom(t,f)=f$, $\cod(t,f)=t+f$, $0_f=(0_{A,B},f)$, and
\begin{equation}
    (s,g)+(t,f)=(s+t,f)
\end{equation}
provided $g=t+f$. It is straightforward checking that $(H,\dom,\cod,0,+)$, as specified, is a 2-cell structure over $\Mon$. 

It is perhaps worthwhile noting the particular case of groups. Indeed, when $B$ is a group then $(t,f)\in H(A,B)$ if and only if $$t(x+y)=t(x)+f(x)+t(y)-f(x)$$ for every $x,y\in A$, which corresponds to an instance of the well-known notion of a crossed homomorphism as soon as the conjugation $f(x)+t(y)-f(x)$ is written as an action  $f(x)\cdot t(y)=f(x)+t(y)-f(x)$ so that $t(x+y)=t(x)+f(x)\cdot t(y)$.

In general, these 2-cells are not natural. In an abstract category with a 2-cell structure only a relative notion of naturality exists (Definition \ref{def: natural wrt}). Two horizontally composable 2-cells are natural with respect to each other precisely when their horizontal composition is defined via the middle interchange law.

The 2-cell structure in the category of monoids as described above is a situation where not every pair of 2-cells is horizontally composable even though their source and target would suggest a composition. In monoids, for $x=(t,f)$ and $y=(t',f')$ as displayed below, to say that the pair $(x,y)$ is horizontally composable is the same as saying that $x$ is natural with respect to $y$ in the sense of Definition \ref{def: natural wrt} which holds if and only if 
\begin{equation}\label{eq: t(t'+f')}
t(t'+f')+ft'=tt'+ft'+tf'.
\end{equation}
When that is the case we have $(t,f)\otimes(t',f')=(t\otimes t',ff')$ with $t\otimes t'=tt'+ft'+tf'$ or $t\otimes t'=t(t'+f')+ft'$ as illustrated
\begin{equation}\label{eq: tensor comp t't}
\xymatrix{
{A}
\ar@<1ex>@/^1pc/[rr]^{f'}="1"
\ar@<-1ex>@/_1pc/[rr]_{t'+f'}="2"   &&  {B} 
\ar@<1ex>@/^1pc/[rr]^{f}="3"
\ar@<-1ex>@/_1pc/[rr]_{t+f}="4"   &&  {C} \\
\ar@{=>}|{(t',f')} "1";"2"
\ar@{=>}|{(t,f) } "3";"4"
}
\mapsto\xymatrix{{A} 
\ar@<1ex>@/^1pc/[rr]^{f f'}="2"
\ar@<-1ex>@/_1pc/[rr]_{g g' }="4"     && {C} & \\
\ar@{=>}|{(t\otimes t',ff') } "2";"4"}
\end{equation}
with $g=t+f$ and $g'=t'+f'$. In virtue of equation (\ref{eq: t(x+y)}), if the monoid $C$ admits right cancellation, then any pair $(x,y)$ as described above is horizontally composable. However, this condition does not hold in general. Thus we observe that the 2-cell structure under consideration is not natural. Consequently, the category $\Mon$ with this particular 2-cell structure does not qualify as a 2-category. 
 This outcome is not unexpected, as we have significantly abstracted from the conventional perspective of treating a monoid as a one object category to endow $\Mon$ with a natural 2-cell structure (see list of examples in Section \ref{sec: examples}).
Nonetheless, as we will demonstrate, this approach yields a coherent and meaningful framework to operate within. Moreover, it offers the advantage of generating a richer concept of internal pseudomonoid compared to one restricted to natural 2-cell structures. This aspect is addressed in a forthcoming paper in which a characterization for internal pseudomonoids with respect to a parameterized 2-cell structure is given.

Here is a simple example to illustrate the fact that $\Mon$, equipped with the 2-cell structure detailed before, is not a 2-category. Take $A=B=C$ as the linear chain three element monoid, i.e., $(\{1,2,3\},\max,1)$, and represent maps $t\colon{\{1,2,3\}\to \{1,2,3\}}$ as vectors $t=(t(1),t(2),t(3))$. Then a quick computation shows that $t=t'=(1,3,2)$ and $f=f'=(1,2,3)$ yields, on the one hand  
\begin{equation}
    t(t'+f')+ft'=\max(t(\max(t',f')),ft')=(1,3,2)
\end{equation}
while on the other hand it yields
\begin{equation}
    tt'+ft'+tf'=\max(tt',\max(ft',tf'))=(1,3,3)
\end{equation}
showing that the middle interchange law does not hold in general. Of course, with $C$ a commutative monoid, every map $t$ with $t(x+y)=t(x)+t(y)$ satisfies conditions $(\ref{eq: t(x+y)})$  and $(\ref{eq: t(t'+f')})$ so that, by fixing $f=1_C$, the counter-example had to be found with a non-monotone map $t$ that would satisfy equation $(\ref{eq: t(x+y)})$ but not equation $(\ref{eq: t(t'+f')})$. 

\section{Characterizing the category of 2-cell structures over a fixed base category}

The following result is a slight improvement of the characterization detailed in the preprint \cite{cmucpreprint2008} which is at the origin of this project.
It underscores the rationale for adopting additive notation for the vertical composition of 2-cells. Additionally, it elucidates how wiskering composition reflects both left and right actions of morphisms on 2-cells, further solidifying the analogy between 2-cells and morphisms, reminiscent of the relationship between vectors and scalars in a vector space.

\begin{proposition}\label{prop: 2-cell structs characterization}
    Let $\C$ be a category and denote by $\hom$ its hom bifunctor. The category $\cell(\C)$ of all 2-cell structures over the base category $\C$ is isomorphic to the category whose objects are families of tuples $(H,\dom,\cod,0,+)_{A,B}$, indexed by pairs of objects $(A,B)$ in $\C$, where each $H_{A,B}$ is a set and is denoted as $H(A,B)$, while each $\dom_{A,B}$, $\cod_{A,B}$, $0_{A,B}$ ,$+_{A,B}$ is a map with domain and codomain as displayed
        \begin{equation}
\xymatrix{H(A,B)\times_{\hom(A,B)} H(A,B) \ar@{->}[r]^(.7){+_{A,B}} & H(A,B) \ar@{>}@<1.5ex>[r]^(.4){\dom_{A,B}} \ar@{->}@<-1.5ex>[r]_(.4){\cod_{A,B}} & \hom(A,B) \ar@{->}@<0ex>[l]|(.5){0_{A,B}}},
    \end{equation}
 together with an indexed family of maps 
    \begin{equation}\label{eq: mu}
        \mu_{A',A,B,B'}\colon{\hom(B,B')\times H(A,B) \times \hom(A',A)\to H(A',B')}
    \end{equation}
    satisfying the following conditions
    \begin{align}
        \dom(\mu(u,x,v))&=u\dom(x)v\\
        \cod(\mu(u,x,v))&=u\cod(x)v\\
        \mu(u,0_{A,B}(f),v)&=0_{A',B'}(ufv)\\
        \mu(u,x',v)+\mu(u,x,v)&=\mu(u,x'+x,v)\\
        \mu(u',\mu(u,x,v),v')&=\mu(u'u,x,vv')\\
        \mu(1_B,x,1_A)&=x\\
        \dom(0_{A,B}(f))=f&=\cod(0_{A,B}(f))\\
        \dom(x'+x)=\dom(x)&,\quad \cod(x'+x)=\cod(x')\\
        0_{A,B}(\cod(x))+x=x&=x+0_{A,B}(\dom(x))\\
        x''+(x'+x)&=(x''+x')+x
    \end{align}
    for all appropriate morphisms $u$, $v$, $u'$, $v'$, $f$ in $\C$ and $x'',x',x\in H(A,B)$ with $\dom(x'')=\cod(x')$ and $\dom(x')=\cod(x)$; the subscripts in $\dom_{A,B}$,  $\cod_{A,B}$, $+_{A,B}$ and $\mu_{A',A,B,B'}$ have been removed for readability.

    A morphism in $\cell(\C)$ is an indexed family of maps
    \begin{equation}
        \tau_{A,B}\colon{H(A,B)\to H'(A,B)}
    \end{equation}
    such that
    \begin{align}
        \tau_{A',B'}(\mu_{A',A,B,B'}(u,x,v))&=\mu'_{A',A,B,B'}(u,\tau_{A,B}(x),v)\\
        \dom'(\tau_{A,B}(x))&=\dom_{A,B}(x)\\
        \cod'(\tau_{A,B}(x))&=\cod_{A,B}(x)\\
        \tau_{A,B}(0_{A,B}(f))&=0'_{A,B}(f)\\
        \tau_{A,B}(x'+x)&=\tau_{A,B}(x')+'\tau_{A,B}(x)
    \end{align}
    for all objects $A',A,B,B'$ and morphisms $u\colon{B\to B'}$, $f\colon{A\to B}$, $v\colon{A'\to A}$ in $\C$ and for all $x\in H(A,B)$.
\end{proposition}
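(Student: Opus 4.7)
The plan is to exhibit an isomorphism of categories by constructing explicit functors in both directions and observing that they are strictly mutually inverse. The essential idea is that a bifunctor $H\colon \C^{\text{op}}\times\C\to\Set$ is nothing more than an assignment of a set $H(A,B)$ to each pair $(A,B)$ together with a wiskering operation encoding its action on morphisms, and that a natural transformation between such bifunctors is nothing more than a family of component maps satisfying a single compatibility with that wiskering. Everything else in the stated axioms is just a pointwise rewriting of the internal category structure in $\Set^{\C^{\text{op}}\times\C}$.

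First, given a 2-cell structure $(H,\dom,\cod,0,+)$ in the original sense, I would define the sets $H(A,B)$ and componentwise operations $\dom_{A,B}$, $\cod_{A,B}$, $0_{A,B}$, $+_{A,B}$ to be the components of the corresponding natural transformations at $(A,B)$, and set
\begin{equation}
\mu_{A',A,B,B'}(u,x,v)\;=\;H(v,u)(x).
\end{equation}
Functoriality of $H$ on identities and on composites in $\C^{\text{op}}\times\C$ rewrites exactly as the two axioms $\mu(1_B,x,1_A)=x$ and $\mu(u',\mu(u,x,v),v')=\mu(u'u,x,vv')$ (the order reversal in the first slot comes from the contravariance). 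Naturality of $\dom$, $\cod$, $0$, and $+$ at a morphism $(v,u)$ in $\C^{\text{op}}\times\C$ rewrites as the four axioms $\dom(\mu(u,x,v))=u\dom(x)v$, $\cod(\mu(u,x,v))=u\cod(x)v$, $\mu(u,0(f),v)=0(ufv)$, and $\mu(u,x',v)+\mu(u,x,v)=\mu(u,x'+x,v)$. Finally, the last four axioms are just the pointwise statement of the internal category axioms in the functor category, valid componentwise since all limits and composites here are computed pointwise.

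Conversely, given pointwise data I would reconstruct a bifunctor by declaring $H(v,u)(x)=\mu(u,x,v)$; the two $\mu$-functoriality axioms are precisely what is needed to make this a genuine functor $\C^{\text{op}}\times\C\to\Set$. The four compatibility axioms with $\dom,\cod,0,+$ ensure that these componentwise assignments assemble into natural transformations, and the remaining four axioms upgrade this diagram to an internal category. Because at each stage we are merely unpacking definitions with no auxiliary choices, the two constructions are strictly mutually inverse. The treatment of morphisms is analogous: a natural transformation $\tau\colon H\Rightarrow H'$ has components $\tau_{A,B}$, and its naturality at an arbitrary $(v,u)$ rewrites as $\tau_{A',B'}(\mu(u,x,v))=\mu'(u,\tau_{A,B}(x),v)$, while its compatibility with the internal category structure rewrites componentwise as the remaining four identities.

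I do not expect a genuinely hard step; the result is essentially a bookkeeping lemma. The only subtlety, and the main place to be careful, is the variance convention that turns a single two-sided wiskering map $\mu$ into the right encoding of bifunctoriality: one must verify that the axiom $\mu(u'u,x,vv')=\mu(u',\mu(u,x,v),v')$ does indeed encode both the left functoriality in $\C$ and the right functoriality in $\C^{\text{op}}$ simultaneously, together with the commutativity of left and right wiskering that is implicit in a single bifunctor. Once this is observed, the rest of the verification is a direct rewriting of each original axiom as its pointwise counterpart.
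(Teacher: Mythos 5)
Your proof is correct and takes essentially the approach the paper intends: the paper itself only defers to Proposition 5.3 of an earlier preprint, but the key observation it records immediately after the statement --- that a bifunctor $H\colon\C^{\text{op}}\times\C\to\Set$ is exactly a family of sets together with a wiskering operation $\mu_{A',A,B,B'}(u,x,v)=H(v,u)(x)$ --- is precisely the pivot of your argument. The rest of your verification (pointwise limits in the functor category, naturality rewritten as compatibility with $\mu$, the variance bookkeeping in $\mu(u'u,x,vv')=\mu(u',\mu(u,x,v),v')$) is the same routine unpacking.
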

\begin{proof}
    See Proposition 5.3 in \cite{preprint2009}.
\end{proof}

The scenario outlined earlier in the context of monoids provides a clear example where all conditions can be readily verified. Specifically, for any two monoids $A$ and $B$, the set $H(A,B)$ comprises pairs $(t,f)$, where $f\colon A\to B$ is a monoid homomorphism and $t\colon A\to B$ is a set-theoretical map from the underlying set of $A$ to the underlying set of $B$, satisfying the condition that the map $t+f\colon A\to B$ is a monoid homomorphism. This condition permits defining $\cod(t,f)$ as $t+f$ and suggests setting $0_{A,B}(f)=(0_{A,B},f)$ and $(s,t+f)+(t,f)=(s+t,f)$, a formula reminiscent of those encountered in internal groups and crossed modules. For further illustrative examples in a similar vein, refer to \cite{NMF.15}.

The key observation in describing a 2-cell structure as suggested in Proposition~\ref{prop: 2-cell structs characterization}  is that a  bifunctor $H$, from $\C^{\text{op}}\times \C$ to $\Set$, is nothing but a family of sets $H(A,B)$ indexed by pairs of elements in $\C$ together with a family of maps $\mu_{A',A,B,B'}$ as displayed in equation $(\ref{eq: mu})$ such that $$\mu_{A',A,B,B'}(u,x,v)=H(v,u)(x)\in H(A',B')$$ which, abusively, is sometimes represented as $\mu(u,x,v)=uxv$ as illustrated.

\begin{equation}
    \xymatrix{A \ar@{=>}[r]^{x} & B\ar[d]^{u}\\A'\ar[u]^{v}\ar@{=>}[r]_{uxv} & B'}
\end{equation}

The particular situation of a category with a single object (seen as a monoid) is worthwhile mentioning. Let $\C=(M,\cdot,1)$ be a monoid considered as a one object category. The category of $M$-semibimodules is fully embedded in $\cell(\C)$.

\begin{proposition}\label{prop: one object}
    Let $\C=(M,\cdot,1)$ be a one object category seen as a monoid. The category whose objects are  pairs $(A,\mu)$ with $A=(A,+,0)$  a monoid and $\mu\colon{M\times A\times M} \to A$ a map such that for all $a,a'\in A$, $u,u',v,v'\in M$:
    \begin{align}
        \mu(u,0,v)&=0\label{eq: mu41}\\
        \mu(u,a',v)+\mu(u,a,v)&=\mu(u,a'+a,v)\\
        \mu(u',\mu(u,a,v),v')&=\mu(u'u,a,vv')\label{eq: 43}\\
        \mu(1,a,1)&=a\label{eq: mu44}
    \end{align}
    is fully embedded in $\cell(\C)$ by taking $H=M\times A\times M$, $\dom(g,a,f)=f$, $\cod(g,a,f)=g$, $0(f)=(f,0,f)$, $(h,a_1,g)+(g,a_2,f)=(h,a_1+a_2,f)$, for all $a_1,a_2\in A$ and $f,g,h\in M$. Moreover, $(g,a,f)\in H$ is natural with respect to $(g',a',f')\in H$ if and only if 
    \begin{equation}
        \mu(1,a,g')+\mu(f,a',1)=\mu(g,a',1)+\mu(1,a,f').
    \end{equation}
\end{proposition}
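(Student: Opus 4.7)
The plan is to invoke Proposition \ref{prop: 2-cell structs characterization}. Since $\C$ has a single object, the indexing over pairs collapses and a 2-cell structure over $\C$ reduces to a tuple $(H,\dom,\cod,0,+,\mu_H)$ with $\mu_H\colon M\times H\times M\to H$ satisfying the listed axioms. Given a semi-bimodule $(A,\mu)$, I take $H = M\times A\times M$ with $\dom,\cod,0,+$ as in the statement and complete the structure by setting
\begin{equation*}
\mu_H(u,(g,a,f),v) = (ugv,\mu(u,a,v),ufv),
\end{equation*}
a choice essentially forced by the axioms $\dom(\mu_H(u,x,v))=u\dom(x)v$ and $\cod(\mu_H(u,x,v))=u\cod(x)v$ together with compatibility with $0$ and $+$.

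The axiom verifications are then routine. The axioms pertaining only to $H$ (identity laws, associativity of $+$, domain and codomain of $0$ and of a sum) reduce to the monoid laws in $A$; those involving $\mu_H$ translate term by term into the semi-bimodule axioms: the zero-preservation axiom becomes (\ref{eq: mu41}), distributivity over $+$ becomes the unlabelled additivity axiom, the interchange $\mu_H(u',\mu_H(u,x,v),v')=\mu_H(u'u,x,vv')$ becomes (\ref{eq: 43}), and the unit axiom becomes (\ref{eq: mu44}).

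For the embedding, a semi-bimodule morphism $\phi\colon(A,\mu)\to(A',\mu')$ induces $\tau_\phi(g,a,f)=(g,\phi(a),f)$, which satisfies the five morphism conditions of Proposition \ref{prop: 2-cell structs characterization} because $\phi$ is a monoid homomorphism intertwining the actions; faithfulness is immediate by restricting to triples $(1,a,1)$. The fullness claim is the subtlest step I anticipate: given $\tau\colon H\to H'$, preservation of $\dom$ and $\cod$ forces $\tau(g,a,f)=(g,\psi(g,a,f),f)$ for some $\psi$, and then the additivity relation $\psi(h,a_1+a_2,f)=\psi(h,a_1,g)+\psi(g,a_2,f)$ together with $\mu_H$-compatibility evaluated at triples $(1,a,1)$ must be combined to collapse $\psi(g,a,f)$ to a function of $a$ alone that defines a semi-bimodule morphism.

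For the naturality characterisation, substituting $x=(g,a,f)$ and $y=(g',a',f')$ into Definition \ref{def: natural wrt} and expanding the four wiskered terms via the formula for $\mu_H$, one finds that the outer $M$-coordinates of the two vertical sums match automatically, so the equation reduces to equality of their middle $A$-coordinates, which is exactly $\mu(g,a',1)+\mu(1,a,f')=\mu(1,a,g')+\mu(f,a',1)$, i.e.\ the displayed relation.
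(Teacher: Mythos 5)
Your construction coincides with the paper's: the printed proof consists precisely of supplementing the data in the statement with the whiskering map $(u,(g,a,f),v)\mapsto(ugv,\mu(u,a,v),ufv)$, invoking Proposition \ref{prop: 2-cell structs characterization}, and omitting the verifications. Your term-by-term matching of the axioms of that proposition with (\ref{eq: mu41})--(\ref{eq: mu44}) and the monoid laws of $A$ is correct, and so is your naturality computation: both sides of $\cod(x)y+x\dom(y)=x\cod(y)+\dom(x)y$ have outer coordinates $gg'$ and $ff'$, and comparing middle coordinates gives exactly the displayed identity.

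The one place where your sketch does not close is fullness, which you yourself flag as the subtle step. Preservation of $\dom$ and $\cod$ does force $\tau(g,a,f)=(g,\psi(g,a,f),f)$, but the remaining conditions do not collapse $\psi$ to a function of $a$ alone. Preservation of $+$ gives $\psi(h,a_1+a_2,f)=\psi(h,a_1,g)+\psi(g,a_2,f)$ and preservation of $0$ gives $\psi(f,0,f)=0$, which still leaves room for a ``coboundary'' term $\psi(g,a,f)=a+c(g,f)$ with $c(h,f)=c(h,g)+c(g,f)$ and $c(f,f)=0$. Concretely, take $M=(\mathbb{N},+,0)$, $A=A'=(\mathbb{Z},+,0)$ with the trivial actions $\mu(u,a,v)=a=\mu'(u,a,v)$, and set $\tau(g,a,f)=(g,a+g-f,f)$. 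All five morphism conditions of Proposition \ref{prop: 2-cell structs characterization} hold (for the whiskering one, $(u+g+v)-(u+f+v)=g-f$), yet $\tau$ is not of the form $(g,a,f)\mapsto(g,\phi(a),f)$ for any $\phi$ depending on $a$ alone, hence is not induced by a morphism of pairs $(A,\mu)\to(A',\mu')$. So the collapse you propose cannot be derived from the listed axioms: either ``fully'' must be read as a faithful embedding, or the morphisms of the domain category must be enlarged. Since the paper omits this verification entirely, your attempt at least makes the difficulty visible, but the fullness argument as sketched would fail.
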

\begin{proof}
    Following Proposition \ref{prop: 2-cell structs characterization} it remains to specify the wiskering composition which is given as displayed.
    \begin{align*}
        M\times ( M\times A\times M )\times M\to M\times A\times M\\
        (u,(g,x,f),v)\mapsto (ugv,\mu(u,x,v),ufv)
    \end{align*}
    The remaining details are straightforward verification and are omitted.
\end{proof}

If by an $M$-semibimodule we mean a system $(A,0,+,\mu)$ with $(A,+,0)$ a monoid and $\mu\colon{M\times A\times M\to A}$ a map satisfying conditions $(\ref{eq: mu41})$--$(\ref{eq: mu44})$, then, as suggested by Proposition \ref{prop: one object}, every $M$-semibimodule induces a 2-cell structure over the monoid  $(M,\cdot,1)$ considered as a one object category. Moreover, every 2-cell structure $(H,\dom,\cod,0,+)$ over $M$ gives rise to a $M$-semibimodule with 
\begin{equation}
    \xymatrix{A=\{a\in H\mid \dom(a)=\cod(a)=1\}\ar@{^{(}->}[r]^(.8){k} & H}
\end{equation}
as soon as the map $0\colon{M\to H}$ splits into $0_{\bullet}$ and $0^{\bullet}$, i.e., $0(f)=0_{\bullet}(f)+0^{\bullet}(f)$, as illustrated,
\begin{equation}
    \xymatrix{
{\bullet} 
\ar@<1ex>@/^2pc/[rr]^{f}="1"
\ar@<-1ex>@/_2pc/[rr]_{f}="3"\ar[rr]|{1}="2"    && {\bullet}  \\
\ar@{=>}^(.56){0^{\bullet}(f) } "1";"2"
\ar@{=>}^(.45){0_{\bullet}(f) } "2";"3"} =
\xymatrix{
{\bullet} 
\ar@<1ex>@/^2pc/[rr]^{f}="1"
\ar@<-1ex>@/_2pc/[rr]_{f}="3"    && {\bullet} & \\
\ar@{=>}^{0(f) } "1";"3"}
\end{equation}
and there exists a map $q\colon{H\to A}$ such that
\begin{align}
 qk(a)&=a\\
    0_{\bullet}(\cod(x))+kq(x)+0^{\bullet}(\dom(x))&=x\\
     kq(0_{\bullet}(g))&=0(1)=    kq(0^{\bullet}(f))\\
    kq(x+y)&=kq(x)+kq(y)
\end{align}
 for all $a\in A$, $x,y\in H$ with $\dom(x)=\cod(y)$ and $f,g\in M$; in addition
\begin{equation}\label{eq: 0=0+0}
    0(1)=0^{\bullet}(f)+0_{\bullet}(f)
\end{equation}
for all $f\in M$ and equation $(\ref{eq: 43})$ must be satisfied for $\mu=q\mu_H$, as detailed below.  

Indeed, the assumptions above are sufficient to produce a bijection 
\begin{equation}
    \xymatrix{H \ar@{->}@<0.5ex>[r]^(.35){\alpha} & M\times A\times M \ar@{->}@<0.5ex>[l]^(.65){\beta} }
\end{equation}
with $\alpha(x)=(\cod(x),q(x),\dom(x))$ and $\beta(g,a,f)=0_{\bullet}(g)+k(a)+0^{\bullet}(f)$ for all $a\in A$, $x\in H$, $f,g\in M$ thus permitting to define the map $\mu\colon{M\times A\times M\to A}$ by the formula $\mu(u,a,v)=q\mu_H(u,k(a),v)$ with $\mu_H\colon{M\times H\times M\to H}$, as illustrated,
\begin{equation}
\xymatrix{
{\bullet} 
\ar@<1ex>@/^1pc/[rr]^{1}="1"
\ar@<-1ex>@/_1pc/[rr]_{1}="2"    && {\bullet} \ar[dd]^{u} & \\
\\
{\bullet} \ar[uu]^{v}
\ar@<1ex>@/^1pc/[rr]^{uv}="3"
\ar@<-1ex>@/_1pc/[rr]_{uv}="4"    && {\bullet} & \\
\ar@{=>}^{k(a) } "1";"2"
\ar@{=>}|{\mu_H(u,k(a),v) } "3";"4"}
\end{equation}
derived from equation $(\ref{eq: mu})$ as $\mu_H(u,x,v)=H(v,u)(x)$.
Finally, it is readily observed that condition $(\ref{eq: 0=0+0})$ can be discarded, thus giving rise to a slight generalization of a $M$-semibimodule as a system $(A,0,\rho,\mu)$ with 
 \begin{align}
        \mu(u,0,v)&=0\label{eq: mu41a}\\
        \rho(\mu(u,a_1,v),g,\mu(u,a_2,v))&=\mu(u,\rho(a_1,g,a_2),v)\\
        \mu(u',\mu(u,a,v),v')&=\mu(u'u,a,vv')\\
        \mu(1,a,1)&=a\label{eq: mu44a}\\
        \rho(a,f,0)&=a=\rho(0,f,a)\\
        \rho(a_1,f,\rho(a_2,g,a_3))&=\rho(\rho(a_1,f,a_2),g,a_3)
    \end{align}
   where $\rho\colon{A\times M\times A\to A}$ is obtained as
\begin{align}
    k\rho(a_1,g,a_2)=k(a_1)+0^{\bullet}(g)+0_{\bullet}(g)+k(a_2).
\end{align}

The primary concept driving this project is the analysis of a category $\C$ through the examination of its behavior under various 2-cell structures. The subsequent result indicates a general procedure for constructing arbitrary 2-cell structures over a base category.

\section{Parameterized 2-cell structures on arbitrary categories}

In this section, a general procedure for constructing parameterized 2-cell structures within arbitrary categories is presented.
Based on the previous result, and to parameterize 2-cell structures over a base category $\C$, it becomes necessary to introduce a novel and abstract categorical structure on each object $B$ in $\C$. This abstraction was derived after comprehensive analysis aimed at encompassing a broad range of examples arising from various specific situations, including, but not limited to, the following:
\begin{enumerate}
    \item $(B,+,0)$ a monoid;
    \item $(B,+,0,\overline{()})$ a group;
    \item $(B,m)$ a Mal'tsev algebra with $m\colon B^3\to B$;
    \item $(B,+,\overline{()})$ an inverse semigroup;
    \item $(B,R,m,\overline{() })$ a Brandt groupoid with $R\subseteq B\times B$ and $m\colon{R\to B}$.
\end{enumerate}

The result is a category that will be denoted $\Bla(\C)$ whose objects are four-tuples $(B,R,m,e)$ with $B$ an object in $\C$, $R\hookrightarrow B\times B \times B$ a ternary relation on $B$ and $e\colon{B\to B}$, $m\colon{R\to B}$ morphisms in $\C$. Note that  if there is no canonical choice for products in $\C$, or even in the event that products may not always exist, it is nevertheless possible to define $R$ by specifying three projection morphisms $\pi_1,\pi_2,\pi_3\colon{R\to B}$ as part of the structure with the property that the triple $(\pi_1,\pi_2,\pi_3)$ is jointly monic. Moreover, each system $(B,R,m,e)$ has to satisfy the following two conditions:
\begin{enumerate}
    \item for every object $A$ in $\C$ and every three parallel morphisms $x,y,z\colon{A\to B}$, if $\langle x,y,ey\rangle$ and $\langle ey,y,z\rangle$ factor through $R$ then $\langle x,y,z\rangle$ factors through $R$ as well, i.e.,
    \begin{equation} 
    \begin{array}{rl}
& (x,y,ey)\in R \\
\therefore & (ey,y,z)\in R \\
\hline
 & (x,y,z)\in R
\end{array}
\end{equation}
\item for every $x\colon{A\to B}$, the morphism $\langle ex,x,ex\rangle$ factors through $R$, i.e.,
\begin{equation}
    (ex,x,ex)\in R.
\end{equation}
\end{enumerate}

The  2-cell structures parameterized over $\C$ will thus be indexed by sections to the forgetful functor from $\Bla(\C)$ to $\C$. Further details are provided in the following result.

\begin{proposition}\label{prop: split-eip,mono}
    Let $\C$ be a category. Every $\C^{*}$ obtained as a (split-epi,mono) factorization in $\Cat$
    \begin{equation}
        \xymatrix{\Bla(\C)\ar[rr]^{U} \ar@<-.81ex>[rd]_{Q}&&\C\\ & \C^{*}\ar@<-.81ex>[ul]_{R}\ar@{^{(}->}[ru]_{V}},\quad U=VQ, QR=1,  (UR=V)
    \end{equation}
    with $U(B,R,m,e)=B$ the forgetful functor from $\Bla(\C)$ to $\C$, can be described as follows:
    \begin{enumerate}
        \item the objects are those objects in $\C$ that can be equipped with a (chosen) structure $R(B)=(B,R_B,m_B,e_B)$ in $\Bla(\C)$;
        \item the morphisms are morphisms $u\colon{B\to B'}$ in $\C$ for which:
        \begin{enumerate}
            \item $e_{B'}u=ue_B$
            \item if $\langle x,y,z \rangle$ factors through $R_B$ then $\langle ux,uy,uz\rangle$ factors through $R_{B'}$ and $$um_B\langle x,y,z\rangle=m_{B'}\langle ux,uy,uz\rangle.$$
        \end{enumerate}
    \end{enumerate}
    Furthermore, each $R(\C^{*})$ induces a bifunctor
\begin{equation}
    H_{R}\colon{\C^{\text{op}}\times \C^{*}\to \Set} 
\end{equation}
determined as follows:
\begin{enumerate}
    \item for every pair of objects $A$, $B$ in $\C$, the set $H_R(A,B)$ consists of column vectors
    \begin{equation}\nonumber
    \begin{bmatrix}
        f\\
        t\\
        g
    \end{bmatrix}
\end{equation}
with $g,t,f\colon{A\to B}$ morphisms in $\C$ such that $\langle t,f,e_B f\rangle$  and $\langle e_B g,g,t\rangle$ factor through $R_B$. The map $H_{R}(v,u)\colon{H_{R}(A,B)\to H_{R}(A',B')}$ is obtained as
\begin{equation}
H_R(v,u)\left(
    \begin{bmatrix}
        f\\
        t\\
        g
    \end{bmatrix}
\right)=
    \begin{bmatrix}
        ufv\\
        utv\\
        ugv
    \end{bmatrix}
\end{equation}
    which is well defined for all $A'$, $A$, $B$, $B'$ objects in $\C$ as well as morphisms  $v\colon{A'\to A}$ and $f,t,g\colon{A\to B}$ in $\C$, while $u\colon{B\to B'}$ is a morphism in $\C^{*}$.
\end{enumerate}

\end{proposition}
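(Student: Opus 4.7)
The argument splits into the two halves of the statement. For the characterization of $\C^{*}$, I would exploit the fact that a monomorphism in $\Cat$ is faithful and injective on objects, so $V$ identifies $\C^{*}$ with its image subcategory of $\C$. From $U = VQ$ this image contains every object of the form $U(B, R, m, e) = B$; the section $R$ with $QR = 1_{\C^{*}}$ then supplies, for each object $B$ of $\C^{*}$, a chosen structure $R(B) = (B, R_B, m_B, e_B)$ in $\Bla(\C)$, giving (1). A morphism $u : B \to B'$ of $\C^{*}$ lifts along $R$ to a morphism $R(B) \to R(B')$ in $\Bla(\C)$, and unpacking the definition of morphism in $\Bla(\C)$ gives exactly conditions (2)(a) and (2)(b). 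Conversely, any $u$ of $\C$ satisfying these conditions already defines a morphism $R(B) \to R(B')$ in $\Bla(\C)$ whose image under $U$ is $u$; the mono property of $V$ then forces $u$ to belong to $\C^{*}$.

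For the bifunctor $H_R$, three items need verifying. First, $H_R(A, B)$ is a well-defined set of triples of morphisms $A \to B$; this reads off the definition. Second, the assignment $H_R(v, u)$ lands in $H_R(A', B')$: starting from $\langle t, f, e_B f\rangle$ factoring through $R_B$, condition (2)(b) on $u \in \C^{*}$ produces a factorization of $\langle ut, uf, u e_B f\rangle$ through $R_{B'}$, and (2)(a) rewrites $u e_B f$ as $e_{B'}(uf)$; precomposing with $v : A' \to A$ yields $\langle utv, ufv, e_{B'}(ufv)\rangle$ factoring through $R_{B'}$, with an analogous chain for $\langle e_{B'}(ugv), ugv, utv\rangle$. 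Third, functoriality of $H_R$ with respect to identities and composition is immediate from $1 \cdot x \cdot 1 = x$ and the associativity of composition in $\C$, together with the reversal of order dictated by $\C^{\text{op}}$.

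The main delicacy is tracking the factor-through condition under base change on both sides. On the codomain side, preservation is built into (2)(b) and genuinely requires $u$ to be a morphism of $\C^{*}$, not merely of $\C$. On the domain side, precomposition with an arbitrary $v : A' \to A$ of $\C$ relies on the elementary observation that if $w : A \to R_B$ witnesses the factorization of $\langle x, y, z\rangle$, then $wv$ witnesses the factorization of $\langle xv, yv, zv\rangle$; this is precisely why the proposition demands $u$, but not $v$, to be a morphism of $\C^{*}$. The two coherence conditions imposed on objects of $\Bla(\C)$ (the inference rule from $(x, y, ey)$ and $(ey, y, z)$ to $(x, y, z)$, and the reflexivity $(ex, x, ex) \in R$) play no role in this construction; they enter only when $H_R$ is subsequently assembled into a 2-cell structure, not in establishing it as a bifunctor.
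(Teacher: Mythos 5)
Your proposal is correct and supplies essentially the ``long but straightforward verification'' that the paper itself omits: the identification of $\C^{*}$ with the image of $U$ via surjectivity of the split epi $Q$ on objects and morphisms, the use of the section $R$ to fix the chosen structures, and the check that $H_R(v,u)$ lands in $H_R(A',B')$ using (2)(a) to rewrite $ue_Bf$ as $e_{B'}(uf)$ and precomposition with $v$ to transport the factorizations through $R_{B'}$, all match the intended argument (including the paper's parenthetical remark that $B$ and $B'$ must be objects of $\C^{*}$). Your closing observation is also accurate: the two closure axioms on objects of $\Bla(\C)$ are not needed for $H_R$ to be a bifunctor, but only later, in Theorem \ref{thm: arbitrary 2-cell structs}, to make $0_{A,B}$ and $+$ well defined.
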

\begin{proof}
    The proof involves a long but straightforward verification thus being omitted. It is clear that the objects $B$ and $B'$ in $\C$ are assumed to be objects in $\C^{*}$ as well.
\end{proof}

For every category $\C$ and every object $B$ in it (for which $B^3$ exists), there are always two possibilities for $R(B)$, namely $(B,B,1_B,1_B)$ and $(B,B^3,\pi_2,1_B)$, which are going to parameterize, not surprisingly, the discrete and co-discrete 2-cells structures over $\C$. However, in general, the best that we can do is to parameterize 2-cell structures over  $\C^{*}$ rather than over $\C$.

We are now in position to analyse, for every $R(\C^{*})$, when does a family of subsets
\begin{equation}
    H(A,B)\subseteq H_R(A,B)
\end{equation}
for which the restriction 
 \begin{equation}\label{eq: mu bis}
        \mu_{A',A,B,B'}\colon{\hom(B,B')\times H(A,B) \times \hom(A',A)\to H(A',B')}
    \end{equation}
is well defined as
\begin{equation}
\mu_{A',A,B,B'}\left(u,
    \begin{bmatrix}
        f\\
        t\\
        g
    \end{bmatrix},v
\right)=
    \begin{bmatrix}
        ufv\\
        utv\\
        ugv
    \end{bmatrix}
\end{equation}
 gives rise to a 2-cell structure over $\C^{*}$ if requiring
 \begin{align}
    \dom_{A,B}  \left( \begin{bmatrix}
        f\\
        t\\
        g
    \end{bmatrix}\right)=& f\\
    \cod_{A,B} \left( \begin{bmatrix}
        f\\
        t\\
        g
    \end{bmatrix}\right)=& g\\
    0_{A,B}(f)=&\begin{bmatrix}
        f\\
        e_B f\\
        f
    \end{bmatrix}\\
    \begin{bmatrix}
        g\\
        t'\\
        h
    \end{bmatrix}+\begin{bmatrix}
        f\\
        t\\
        g
    \end{bmatrix}=&\begin{bmatrix}
        f\\
        m_B\langle t',g,t\rangle\\
        f
    \end{bmatrix}
\end{align}
for every pair of objects $A$, $B$ in $\C$.

\begin{theorem}\label{thm: arbitrary 2-cell structs}
        Let $\C$ be an arbitrary category with $\Bla(\C)$ as described before. For each  (split-epi,mono) factorization in $\Cat$
    \begin{equation}
        \xymatrix{\Bla(\C)\ar[rr]^{U} \ar@<-.81ex>[rd]_{Q}&&\C\\ & \C^{*}\ar@<-.81ex>[ul]_{R}\ar@{^{(}->}[ru]_{V}},\quad U=VQ, QR=1,  (UR=V)
    \end{equation}
    inducing a bifunctor $H_R$ as in Proposition \ref{prop: split-eip,mono}, a family $(H,\dom,\cod,0,+)_{A,B}$ with 
\begin{align}
H(A,B)\subseteq H_R(A,B)\\
    \dom_{A,B}  \left( \begin{bmatrix}
        f\\
        t\\
        g
    \end{bmatrix}\right)=& f\\
    \cod_{A,B} \left( \begin{bmatrix}
        f\\
        t\\
        g
    \end{bmatrix}\right)=& g\\
    0_{A,B}(f)=&\begin{bmatrix}
        f\\
        e_B f\\
        f
    \end{bmatrix}\\
    \begin{bmatrix}
        g\\
        t'\\
        h
    \end{bmatrix}+\begin{bmatrix}
        f\\
        t\\
        g
    \end{bmatrix}=&\begin{bmatrix}
        f\\
        m_B\langle t',g,t\rangle\\
        f
    \end{bmatrix}
\end{align}
    together with a family 
    \begin{equation}
        \mu_{A',A,B,B'}\left(u,
    \begin{bmatrix}
        f\\
        t\\
        g
    \end{bmatrix},v
\right)=H_R(v,u)\left(
    \begin{bmatrix}
        f\\
        t\\
        g
    \end{bmatrix}
\right)
    \end{equation}
    gives a 2-cell structure over $\C^{*}$ if and only if the following conditions hold:
    \begin{enumerate}
        \item if \[\begin{bmatrix}
        f\\
        t\\
        g
    \end{bmatrix}\in H(A,B)\]
    then 
    \[\begin{bmatrix}
        ufv\\
        utv\\
        ugv
    \end{bmatrix}\in H(A',B')\] for all $u$ and $v$ morphisms in $\C^{*}$;
    \item if \[\begin{bmatrix}
        f\\
        t\\
        g
    \end{bmatrix}\in H(A,B)\]
     then $f$ and $g$ are morphisms in $\C^{*}$; 
    \item for every morphism $f\colon{A\to B}$ in $\C^{*}$,
    \[\begin{bmatrix}
        f\\
        e_B f\\
        f
    \end{bmatrix}\in H(A,B)\]
    \item if \[\begin{bmatrix}
        g\\
        t'\\
        h
    \end{bmatrix},\begin{bmatrix}
        f\\
        t\\
        g
    \end{bmatrix}\in H(A,B)\]
    then 
    \[\begin{bmatrix}
        g\\
        t'\\
        h
    \end{bmatrix}+\begin{bmatrix}
        f\\
        t\\
        g
    \end{bmatrix}\in H(A,B)\]
\item  if \[\begin{bmatrix}
        f\\
        t\\
        g
    \end{bmatrix}\in H(A,B)\]
    then $m_B\langle t,f,e_B f\rangle=t =m_B\langle e_B g,g,t\rangle$;
    \item  if \[\begin{bmatrix}
        h\\
        t''\\
        k
    \end{bmatrix},\begin{bmatrix}
        g\\
        t'\\
        h
    \end{bmatrix},\begin{bmatrix}
        f\\
        t\\
        g
    \end{bmatrix}\in H(A,B)\]
    then  $m_B\langle t'',h, m_B\langle t',g,t\rangle\rangle = m_B\langle m_B\langle t'',h,t'\rangle, g,t\rangle$.
    \end{enumerate}
\end{theorem}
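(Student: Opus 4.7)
The plan is to invoke Proposition \ref{prop: 2-cell structs characterization}, which reduces the question to verifying that the tuples $(H,\dom,\cod,0,+,\mu)_{A,B}$ defined by the componentwise formulas satisfy the listed equational axioms and that each operation maps into its claimed codomain within $H$. Thus the proof splits into a well-definedness part and an equations part, and since Proposition \ref{prop: split-eip,mono} already establishes bifunctoriality of $H_R$ over $\C^{*}$, the only new verifications concern (a) the restriction from $H_R$ down to the subset $H$, and (b) the axioms for $+$ and $0$ that involve $m_B$ and $e_B$.

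For the necessity direction, each axiom of Proposition \ref{prop: 2-cell structs characterization} forces one of the theorem's conditions: closure of $H$ under $\mu$ gives condition (1); the requirement that $\dom$ and $\cod$ be natural transformations into $\hom_{\C^{*}}$ forces their values to lie in $\C^{*}$, which is condition (2); closure under $0$ and $+$ yields (3) and (4); the unit laws $0(\cod x) + x = x = x + 0(\dom x)$, read off from the middle entry of the column vector, give condition (5); and associativity of $+$ gives condition (6). For sufficiency, conditions (1), (3), (4) ensure that all operations land in $H$, while (5) and (6) supply the unit and associativity laws for $+$ by direct inspection of the middle entries. The remaining axioms---$\dom\mu(u,x,v) = u\dom(x)v$ and $\cod\mu(u,x,v) = u\cod(x)v$, the identities $\mu(u,0(f),v) = 0(ufv)$ and $\mu(u,x'+x,v) = \mu(u,x',v) + \mu(u,x,v)$, together with functoriality and unitality of $\mu$ in $u,v$---all hold componentwise once we invoke the two defining properties of morphisms in $\C^{*}$ from Proposition \ref{prop: split-eip,mono}, namely $e_{B'}u = ue_B$ and the preservation of $m_B$ by $u$.

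The principal obstacle is showing that the sum $\begin{bmatrix} g \\ t' \\ h \end{bmatrix} + \begin{bmatrix} f \\ t \\ g \end{bmatrix}$, whose middle component is $m_B\langle t',g,t\rangle$, actually belongs to $H_R(A,B)$---that is, that the triples $\langle m_B\langle t',g,t\rangle, f, e_B f\rangle$ and $\langle e_B h, h, m_B\langle t',g,t\rangle\rangle$ factor through $R_B$. Condition (4) sidesteps this by asserting $H$-membership of the sum directly; alternatively, the factorization can be extracted from the first closure axiom of $\Bla(\C)$ together with the fact that $\langle t',g,e_Bg\rangle, \langle e_Bg,g,t\rangle \in R_B$ by hypothesis on the summands. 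A parallel remark applies to $0(f) = \begin{bmatrix} f \\ e_B f \\ f \end{bmatrix}$, whose $H_R$-membership requires $\langle e_Bf,f,e_Bf\rangle \in R_B$, supplied by the second axiom of $\Bla(\C)$. Once these well-definedness points are dealt with, the rest of the verification reduces to routine componentwise calculations that match each axiom of Proposition \ref{prop: 2-cell structs characterization} with the corresponding numbered condition.
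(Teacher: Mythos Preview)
Your approach is correct and is precisely what the paper intends: the theorem is stated without proof in the paper, which treats it as a direct consequence of Proposition~\ref{prop: 2-cell structs characterization} together with the bifunctoriality of $H_R$ established in Proposition~\ref{prop: split-eip,mono}. Your two-direction analysis (matching each axiom of Proposition~\ref{prop: 2-cell structs characterization} to one of conditions (1)--(6), and verifying the remaining $\mu$-axioms via the defining properties of $\C^{*}$-morphisms) is exactly the intended unpacking.

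One small clarification regarding your ``principal obstacle'' paragraph: the more basic point, logically prior to the $H_R$-membership of the sum, is that the expression $m_B\langle t',g,t\rangle$ is defined at all, i.e.\ that $\langle t',g,t\rangle$ factors through $R_B$. This is what the first $\Bla(\C)$ closure axiom supplies from $\langle t',g,e_Bg\rangle\in R_B$ and $\langle e_Bg,g,t\rangle\in R_B$, and you do cite exactly those two hypotheses. Once that is in hand, condition~(4) asserting $H$-membership of the sum automatically gives $H_R$-membership, so the two triples you write down need not be checked separately. Apart from this slight conflation of well-definedness of $m_B$ with $H_R$-membership of the result, your argument is complete.
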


\section{Examples of application}\label{sec: examples}

As an example of application let us explore 2-cell structures over $\Mon$. Up to the present, and to the author's best knowledge, the category of monoids has always been considered as a 2-category with 2-cells obtained via the canonical embedding of $\Mon$ into $\Cat$. This example is obtained when a monoid is seen as a one object category and it may suggest that the category of 2-cell structures over $\Mon$ is not a very interesting situation. Indeed, having only three structures and two of them being trivial, namely the discrete and co-discrete, would not be very interesting. However, that is far from being the case as the following application of Theorem \ref{thm: arbitrary 2-cell structs} shows.

In applying Theorem \ref{thm: arbitrary 2-cell structs} to monoids it is convenient to consider $\C^{*}=\Mon$ with $R\colon{\Mon\to \Bla}$ obtained by associating to each monoid $(B,+,0)$ in $\Mon$ a four-tuple $(B,R,m,e)$ in $\Bla$ with $R=B^3$, $m(b_1,b_2,b_3)=b_1+b_3$ and $e(b)=0$. Moreover, it is also convenient to consider as $\C$  the category whose objects are monoids but whose morphisms are maps between the underlying sets. This is convenient since we need to have $\C^{*}\hookrightarrow \C$. The quotient functor $Q$ is not required in the construction and will be left out of our analysis since it would involve a restriction to a suitable subcategory of $\Bla$ which  does not has any effect in our construction.

According to Theorem \ref{thm: arbitrary 2-cell structs}, every family of subsets $H(A,B)$, indexed by pairs of monoids $(A,B)$, such that
\begin{align}
    \left\lbrace \begin{bmatrix}
        f\\
        0\\
        f
    \end{bmatrix}\middle\vert\  f\in\hom_{\C^{*}}(A,B) \right\rbrace
\subseteq H(A,B)\\
H(A,B) \subseteq \left\lbrace \begin{bmatrix}
        f\\
        t\\
        g
    \end{bmatrix}\middle\vert\  f,g\in\hom_{\C^{*}}(A,B), t\in\hom_{\C}(A,B) \right\rbrace
\end{align}
and being closed under composition in $\C^{*}$, that is, if \[\begin{bmatrix}
        f\\
        t\\
        g
    \end{bmatrix}\in H(A,B)\]
    then 
    \[\begin{bmatrix}
        ufv\\
        utv\\
        ugv
    \end{bmatrix}\in H(A',B')\] for all $u$ and $v$ morphisms in $\C^{*}$, induces a 2-cell structure over the base category $\C^{*}=\Mon$.

So, here is a small list of examples:
\begin{enumerate}
\item The set $H(A,B)$ is equal to:
\begin{equation}
  \left\lbrace \begin{bmatrix}
        f\\
        t\\
        g
    \end{bmatrix} \middle\vert\   f,g\in\hom_{\C^{*}}(A,B), t\in\hom_{\C}(A,B) \right\rbrace
    \end{equation}
    \item Same as (1) except that $t$ is required to be a constant map;
\item Same as (1), restricted to $g=t+f$;
\item Same as (1), restricted to $t=0$;
\item Same as (1),  restricted to $t=0$ and $g=f$;
\item Same as (1),  restricted to $g+t=t+f$;
\item Same as (1),  restricted to $g+t=t+f$ and $t$ a constant map;
\end{enumerate}

In each case, $H(A,B)$ may be further restricted to a subset of invertible 2-cells or to a subset of natural 2-cells, or even both; further details can be found in \cite{NMF.15}. 

Case (3) was considered before; case (4) is  co-discrete;  case (5) is discrete; case (7) becomes the canonical case if instead of a constant map $t\colon{A\to B}$ we take it as an element $t_0\in B$ such that $g(x)+t_0=t_0+f(x)$ for all $x\in A$.

If restricting our analysis to groups instead of monoids we observe, in the first place, that most formulas can be re-written in terms of commutators and conjugations such as for example $g=t+f-t$. Moreover, there are two other possibilities in defining $R(B)$ other than the one considered for monoids. Indeed, if $(B,+,0,\overline{()})$ is a group then we can put $e(b)=b$ and $m(b_1,b_2,b_3)=b_1+\overline{b_2}+b_3$ or else we can put $e(b)=\overline{b}$ and $m(b_1,b_2,b_3)=b_1+b_2+b_3$. The same observation is valid for inverse semigroups while for the case of Mal'tsev algebras the only possibility is to put $e(b)=b$ and let $m(b_1,b_2,b_3)$ be the Mal'tsev operation. The case of Brandt groupoids is more involved and will be considered in a forthcoming paper.



{N. Martins-Ferreira \\
Polytechnic of Leiria\\
P-2411-901, Leiria, Portugal\\
              {martins.ferreira@ipleiria.pt}           
}

\end{document}